\documentclass[11pt]{article}

\usepackage{amsmath,amssymb,amsthm}
\usepackage[a4paper,textwidth=15cm,top=3cm,bottom=3cm]{geometry}
\usepackage{url}
\newcommand{\doi}[1]{doi: \url{#1}}

\newtheorem{theorem}{Theorem}[section]
\newtheorem{proposition}[theorem]{Proposition}
\newtheorem{lemma}[theorem]{Lemma}
\newtheorem{corollary}[theorem]{Corollary}
\newtheorem{question}[theorem]{Question}
\newtheorem{example}[theorem]{Example}

\theoremstyle{definition}
\newtheorem{definition}[theorem]{Definition}

\title{Raikov Remainders of Quotients by Raikov-Complete Almost Metrizable Normal Subgroups}
\newcommand{\shorttitle}{Raikov Quotient Remainders}

\author{Xing-Yu Hu\thanks{Corresponding author.}\\
{\small School of Mathematics and Statistics, Hanjiang Normal University}\\
{\small Shiyan, Hubei 442000, China}\\
{\small \texttt{huxingyu@hjnu.edu.cn}}}

\date{}

\begin{document}

\pagestyle{myheadings}
\markboth{X.-Y. Hu}{\shorttitle}

\maketitle

\begin{abstract}
Let \(N\) be a closed normal subgroup of a topological group \(G\), and let \(\widehat q:\rho G\to\rho(G/N)\) extend the quotient homomorphism. We prove that
\[
        r_\rho(G/N)=\widehat q\bigl(r_\rho(G)\bigr)
\]
holds if and only if \(\widehat q\) is onto and \(\widehat q^{-1}(G/N)=G\), and we show that neither condition implies the other. Both conditions hold whenever \(N\) is Raikov complete and almost metrizable. Consequently, pseudocompactness of \(r_\rho(G)\) passes to \(r_\rho(G/N)\). In particular, the conclusion applies to closed locally compact normal subgroups.
\end{abstract}

\noindent
2020 Mathematics Subject Classification. 22A05, 54D35, 54D40.

\noindent
Key words and phrases. Topological group, Raikov completion, Raikov remainder, quotient group, almost metrizable group, pseudocompactness.

\section{Introduction}

The study of remainders of topological groups is a well-established part of general topology. Comfort and Ross studied pseudocompactness in topological groups \cite{ComfortRoss1966}. Arhangel'skii proved that, for every Hausdorff compactification \(bG\) of a topological group \(G\), the remainder \(bG\setminus G\) is either pseudocompact or Lindel\"of \cite{Arhangelskii2008}. Subsequent work addressed normality, first countability, covering properties, and metrizability of compactification remainders \cite{Arhangelskii2005Remainders,Arhangelskii2009,ArhangelskiiVanMill2016,ArhangelskiiVanMill2017}. Arhangel'skii and Choban studied group-remainders, including the Rajkov remainder \cite{ArhangelskiiChoban2018}, and later considered completeness-type properties, products, and perfect images of group-remainders \cite{ArhangelskiiChoban2019,ArhangelskiiChoban2021}.

For a topological group \(G\), let \(\rho G\) denote its Raikov completion and put
\[
        r_\rho(G)=\rho G\setminus G.
\]
This is the Raikov remainder of \(G\). It is a completion remainder and need not be a compactification remainder. We study its behaviour under the quotient homomorphism
\[
        q:G\longrightarrow G/N,
\]
where \(N\) is a closed normal subgroup of \(G\). The homomorphism \(q\) extends uniquely to
\[
        \widehat q:\rho G\longrightarrow\rho(G/N).
\]
Although \(q\) is onto, \(\widehat q\) need not be onto. Even when it is onto, a point of \(r_\rho(G)\) may be mapped into \(G/N\). These are the two obstructions to the quotient-remainder identity.

Our first quotient result gives an exact criterion. For an arbitrary closed normal subgroup \(N\),
\[
        r_\rho(G/N)=\widehat q\bigl(r_\rho(G)\bigr)
\]
holds precisely when
\[
        \widehat q(\rho G)=\rho(G/N)
        \quad\hbox{and}\quad
        \widehat q^{-1}(G/N)=G.
\]
The examples in the final subsection show that neither condition implies the other. Thus surjectivity of the map between completions and the inverse-image condition are distinct requirements.

The main theorem verifies both requirements when \(N\) is Raikov complete and almost metrizable. In this case \(\widehat q\) is a quotient homomorphism with kernel \(N\),
\[
        \widehat q^{-1}(G/N)=G,
\]
and
\[
        r_\rho(G/N)=\widehat q\bigl(r_\rho(G)\bigr).
\]
It follows that pseudocompactness of \(r_\rho(G)\) passes to \(r_\rho(G/N)\). Closed locally compact normal subgroups form an important special case.

The proof separates the remainder criterion from the completeness of the quotient \((\rho G)/N\). Related completion facts occur in the proof of \cite[Theorem~4.5]{MoralesTkachenko2016}. For almost metrizable kernels, the required quotient completeness follows from Leischner's theorem on neutral subgroups \cite{Leischner1993}. The relevant completion results are discussed before Proposition~\ref{prop:general-criterion} and Lemma~\ref{lem:am-quotient-complete}.

For comparison, we also give formulas for closed subgroups and finite products and derive the corresponding preservation results. Section 2 fixes the notation and the preservation facts used later. Section 3 treats closed subgroups, finite products, quotient homomorphisms, the general completion criterion, Raikov-complete almost metrizable kernels, applications, and sharpness questions.

\section{Preliminaries}\label{sec:preliminaries}

All topological groups are assumed to be Hausdorff. The remainders and subspaces considered below are therefore Tychonoff. We regard the empty space as pseudocompact.

For background on general topology, including countable compactness, see Engelking \cite{Engelking1989}. For the function-theoretic background on pseudocompactness, see Hewitt \cite{Hewitt1948} and Gillman and Jerison \cite{GillmanJerison1976}. For Raikov completions and uniform structures on topological groups, see \cite{ArhangelskiiTkachenko2008,RoelckeDierolf1981}. For a topological group \(G\), let \(\rho G\) denote its Raikov completion. We regard \(G\) as its image in \(\rho G\). Thus \(G\) is a dense subgroup of \(\rho G\). A topological group is Raikov complete if the natural homomorphism into its Raikov completion is onto. A topological group \(L\) is almost metrizable if it contains a compact subgroup \(K\) such that the coset space \(L/K\) is metrizable. Equivalently, \(L\) contains a compact subgroup \(K\) with a countable neighbourhood base in \(L\) \cite[Lemma~4.3.19]{ArhangelskiiTkachenko2008}.

A subgroup \(H\) of a topological group \(L\) is \emph{neutral} if, for every identity neighbourhood \(U\), there is an identity neighbourhood \(V\) such that \(VH\subseteq HU\) \cite[Definition~5.29]{RoelckeDierolf1981}. Normal subgroups are neutral because \(UH=HU\). A group is \emph{inframetrizable} if it embeds topologically in a \v{C}ech-complete group \cite[Proposition--Definition~11.16]{RoelckeDierolf1981}.

\begin{definition}[{\cite[p.~82]{ArhangelskiiChoban2018}}]
The Raikov remainder of a topological group \(G\) is
\[
        r_\rho(G)=\rho G\setminus G.
\]
\end{definition}

Thus \(r_\rho(G)\) is the group-remainder associated with the Raikov completion.

The spelling Rajkov also appears in the literature. We use the spelling Raikov, while preserving the original spelling in the titles of cited papers.

We shall use the following extension property of Raikov completions.

\begin{lemma}\label{lem:extension-to-completions}
Let \(f:G\to K\) be a continuous homomorphism of topological groups. Then \(f\) is uniformly continuous with respect to the two-sided uniformities and admits a unique continuous homomorphic extension
\[
        \widehat f:\rho G\to \rho K.
\]
If \(f(G)\) is dense in \(K\), then \(\widehat f(\rho G)\) is dense in \(\rho K\).
\end{lemma}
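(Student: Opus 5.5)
The plan is to prove the three assertions of Lemma~\ref{lem:extension-to-completions} in order: uniform continuity, existence and uniqueness of the extension, and density of the image. The underlying fact is that \(\rho G\) is the completion of \(G\) with respect to its two-sided uniformity \(\mathcal U_G\). The entourages of \(\mathcal U_G\) are generated by
\[
        W_U=\{(x,y):x^{-1}y\in U\ \hbox{and}\ yx^{-1}\in U\},
\]
where \(U\) runs over the identity neighbourhoods of \(G\). Moreover, \(\rho K\) is complete for its two-sided uniformity, and that uniformity induces the two-sided uniformity of \(K\) on \(K\).

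\emph{Uniform continuity.} Let \(V\) be an identity neighbourhood in \(K\). By continuity at the identity there is an identity neighbourhood \(U\) in \(G\) with \(f(U)\subseteq V\). If \((x,y)\in W_U\), then
\[
        f(x)^{-1}f(y)=f(x^{-1}y)\in V
        \quad\hbox{and}\quad
        f(y)f(x)^{-1}=f(yx^{-1})\in V .
\]
So \((f\times f)(W_U)\subseteq W_V\). Hence \(f\) is uniformly continuous from \(\mathcal U_G\) to \(\mathcal U_K\), and therefore also into the two-sided uniformity of \(\rho K\).

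\emph{Extension.} The target \(\rho K\) is complete and Hausdorff, and \(G\) is dense in \(\rho G\). The standard extension theorem for uniformly continuous maps into complete Hausdorff uniform spaces therefore gives a unique uniformly continuous \(\widehat f:\rho G\to\rho K\) extending \(f\). Uniqueness among merely continuous extensions also holds, because two continuous maps into a Hausdorff space that agree on a dense set coincide.

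To see that \(\widehat f\) is a homomorphism, consider the two continuous maps
\[
        (a,b)\mapsto \widehat f(ab)
        \quad\hbox{and}\quad
        (a,b)\mapsto \widehat f(a)\widehat f(b)
\]
from \(\rho G\times\rho G\) to \(\rho K\). They agree on the dense subset \(G\times G\), and \(\rho K\) is Hausdorff, so they are equal.

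\emph{Density.} The map \(\widehat f\) is continuous, so
\[
        \widehat f(\rho G)=\widehat f\bigl(\overline{G}\bigr)\supseteq f(G).
\]
If \(f(G)\) is dense in \(K\), then it is dense in \(\rho K\), because \(K\) is dense in \(\rho K\). Consequently the larger set \(\widehat f(\rho G)\) is dense in \(\rho K\) as well.

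The only point needing care is the identification of \(\rho G\) with the two-sided uniform completion, so that the uniform extension theorem applies. I will take this from \cite{ArhangelskiiTkachenko2008,RoelckeDierolf1981}. Once it is granted, no step presents a real obstacle.
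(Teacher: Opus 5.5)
Your proof is correct and follows the paper's route: uniform continuity for the two-sided uniformities, then extension into the complete Hausdorff group \(\rho K\), then density via \(f(G)\subseteq\widehat f(\rho G)\). The paper cites Arhangel'skii and Tkachenko (Proposition~1.8.12 and Corollary~3.6.17) for the first two steps, whereas you prove them directly via the entourage computation, the uniform extension theorem, and the density argument for the homomorphism property; you also make explicit the step, left implicit in the paper, that \(f(G)\) is dense in \(\rho K\) because \(K\) is dense in \(\rho K\).
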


\begin{proof}
By \cite[Proposition~1.8.12]{ArhangelskiiTkachenko2008}, \(f\) is uniformly continuous for the two-sided uniformities. It therefore extends uniquely to a continuous homomorphism \(\widehat f:\rho G\to\rho K\) by \cite[Corollary~3.6.17]{ArhangelskiiTkachenko2008}. If \(f(G)\) is dense in \(K\), then \(\widehat f(\rho G)\) contains \(f(G)\) and is therefore dense in \(\rho K\).
\end{proof}

We shall use the following standard preservation facts.

\begin{lemma}[{\cite[Theorem~3.10.24]{Engelking1989}}]\label{lem:continuous-image}
A continuous image of a pseudocompact space is pseudocompact.
\end{lemma}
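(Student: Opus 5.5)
The plan is to argue directly from the definition of pseudocompactness for Tychonoff spaces: a space \(X\) is pseudocompact if every continuous real-valued function on \(X\) is bounded. Let \(f:X\to Y\) be a continuous surjection with \(X\) pseudocompact, and let \(g:Y\to\mathbb R\) be continuous. Then \(g\circ f:X\to\mathbb R\) is continuous, hence bounded, so \(g\circ f(X)\) is a bounded subset of \(\mathbb R\). Since \(f\) is onto, \(g(Y)=g\bigl(f(X)\bigr)\), and therefore \(g\) is bounded. Thus \(Y\) is pseudocompact.

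Two small points need care, although neither is a genuine obstacle. First, the empty space: if \(X=\emptyset\), then \(Y=\emptyset\), and the paper regards the empty space as pseudocompact, so this case is covered. Second, the paper's convention that the spaces involved are Tychonoff. In the intended applications the image \(Y\) will be a subspace of a topological group, such as \(r_\rho(G/N)\), so it is Tychonoff automatically. If the definition of pseudocompactness is taken to include the Tychonoff axiom, we simply note that \(Y\) satisfies it in every use we make of the lemma. If instead one uses Engelking's formulation, the claim is exactly \cite[Theorem~3.10.24]{Engelking1989}.

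Nothing further is needed: the whole argument is the composition step above, and the only thing to check is that the separation axioms are available in each application.
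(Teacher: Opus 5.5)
Your proposal is correct and matches the paper's approach: the paper gives no argument of its own and simply cites \cite[Theorem~3.10.24]{Engelking1989}, whose proof is exactly your step (compose with the continuous surjection, then use surjectivity to transfer boundedness). Your remark on the Tychonoff convention is apt, since every image to which the paper applies the lemma is a subspace of a Hausdorff topological group and hence Tychonoff.
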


The next lemma is the consequence needed for closed-subgroup remainders.

\begin{lemma}\label{lem:closed-normal-pseudo}
Every closed subspace of a normal pseudocompact space is pseudocompact.
\end{lemma}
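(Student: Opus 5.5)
The plan is to combine two standard facts. In a normal space, closed subsets are \(C\)-embedded by the Tietze extension theorem. In a Tychonoff space, pseudocompactness is equivalent to every continuous real-valued function being bounded (Hewitt, \cite{Hewitt1948}).

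Let \(X\) be a normal pseudocompact space and let \(F\subseteq X\) be closed. If \(F=\emptyset\), there is nothing to prove, since we regard the empty space as pseudocompact. Otherwise, take any continuous \(f:F\to\mathbb R\). We need to show that \(f\) is bounded.

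First, apply the Tietze extension theorem \cite{Engelking1989}. Since \(X\) is normal and \(F\) is closed, \(f\) has a continuous extension \(g:X\to\mathbb R\). To avoid worrying about unbounded versions of Tietze, one can instead extend the bounded function \(\arctan\circ f\colon F\to(-\pi/2,\pi/2)\). The standard refinement of Tietze for open intervals then gives an extension \(\tilde g:X\to(-\pi/2,\pi/2)\), and we put \(g=\tan\circ\tilde g\).

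Next, use pseudocompactness of \(X\): the function \(g\) is bounded on \(X\). Hence \(f=g|_F\) is bounded. Since \(F\) is Tychonoff, as a subspace of the Tychonoff space \(X\), the function-theoretic characterization of pseudocompactness \cite{Hewitt1948,GillmanJerison1976} shows that \(F\) is pseudocompact.

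The only point requiring care is the use of Tietze for unbounded real-valued functions. It is valid in every normal space, as stated in Engelking, Theorem~2.1.8 together with its real-line form. No other obstacle is expected. Alternatively, one may cite the known result that a \(C\)-embedded subset of a pseudocompact space is pseudocompact.
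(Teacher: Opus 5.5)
Your argument is correct, but it takes a different route from the paper.

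The paper goes through countable compactness. A normal pseudocompact space is countably compact \cite[Theorem~3.10.21]{Engelking1989}. Countable compactness passes to closed subspaces. A countably compact Tychonoff space is pseudocompact.

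You instead use that closed subsets of a normal space are \(C\)-embedded (Tietze--Urysohn), and you transfer boundedness of real-valued functions directly from \(X\) to \(F\). This is more self-contained and isolates the actual mechanism. It proves the more general fact that every \(C\)-embedded subspace of a pseudocompact Tychonoff space is pseudocompact, with normality of \(X\) used only to make \(F\) \(C\)-embedded.

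The paper's route gives the stronger intermediate conclusion that \(F\) is countably compact, which matches the countable-compactness corollaries elsewhere in the paper. Tietze's theorem still appears there, but hidden: the standard proof that normal pseudocompact spaces are countably compact applies it to a countably infinite closed discrete subset.

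Your \(\arctan\) detour is harmless but unnecessary, since the Tietze--Urysohn theorem in Engelking already covers \(\mathbb R\)-valued maps on closed subsets of normal spaces.
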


\begin{proof}
Every normal pseudocompact space is countably compact \cite[Theorem~3.10.21]{Engelking1989}. Its closed subspaces are countably compact \cite[Theorem~3.10.4]{Engelking1989}, and every countably compact Tychonoff space is pseudocompact \cite[Theorem~3.10.20]{Engelking1989}.
\end{proof}

This explains the normality assumption in the closed-subgroup corollary below. The closed-subgroup formula alone does not suffice, since pseudocompactness need not pass to arbitrary closed subspaces.

\section{Main results}\label{sec:main-results}

\subsection{Closed subgroups}\label{sec:closed-subgroups}

We begin with the standard closed-subgroup formula for Raikov completions \cite[Theorem~3.6.14 and Exercise~3.6.m]{ArhangelskiiTkachenko2008}.

\begin{proposition}\label{prop:closed-subgroup-formula}
Let \(H\) be a closed subgroup of a topological group \(G\). Then the natural embedding of \(H\) into \(G\) extends to a topological isomorphism from \(\rho H\) onto \(\overline H^{\rho G}\). Via this isomorphism,
\[
        r_\rho(H)
        =
        \overline H^{\rho G}\cap r_\rho(G).
\]
\end{proposition}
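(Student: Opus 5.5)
The plan is to first identify \(\rho H\) with \(\overline H^{\rho G}\) and then read off the remainder formula by intersecting with \(r_\rho(G)\).

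\emph{Step 1: \(\overline H^{\rho G}\) is a Raikov completion of \(H\).} Put \(C=\overline H^{\rho G}\). This is a closed subgroup of the Raikov complete group \(\rho G\). A closed subgroup of a Raikov complete group is Raikov complete: the two-sided uniformity of a subgroup is the restriction of the ambient two-sided uniformity, and closed subspaces of complete uniform spaces are complete. Moreover \(H\) is dense in \(C\), and \(H\) carries the subspace topology from \(C\). Hence \(C\) is a Raikov complete group containing \(H\) as a dense topological subgroup. By uniqueness of the Raikov completion, the extension of the inclusion \(H\hookrightarrow C\) (Lemma~\ref{lem:extension-to-completions}) is a topological isomorphism \(\rho H\to C\). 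Composed with the inclusion \(C\hookrightarrow\rho G\), this map coincides with the extension \(\widehat\iota\) of the embedding \(\iota:H\to G\).

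\emph{Step 2: the remainder formula.} Under this identification, \(r_\rho(H)=C\setminus H\). Since \(C\setminus H=(C\cap\rho G)\setminus H\) and \(C\cap G\supseteq H\), it suffices to show that \(C\cap G=H\). This is exactly where closedness of \(H\) in \(G\) enters. The closure of \(H\) in \(G\) is \(C\cap G\), because \(G\) carries the subspace topology of \(\rho G\); and this closure equals \(H\) since \(H\) is closed in \(G\). Therefore
\[
C\setminus H=C\setminus (C\cap G)=C\cap(\rho G\setminus G)=C\cap r_\rho(G).
\]

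The only delicate point is Step 1: one must justify that the two-sided uniformity of \(H\) agrees with the one induced from \(\rho G\), so that completeness is inherited and uniqueness of the completion applies. This is the content of the cited \cite[Theorem~3.6.14]{ArhangelskiiTkachenko2008}, and I would simply invoke it. The remainder of the argument is set-theoretic.
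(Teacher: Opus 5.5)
Your proposal is correct and follows essentially the same route as the paper. You identify \(\rho H\) with \(\overline H^{\rho G}\) through the completion result from Arhangel'skii--Tkachenko; you also sketch that result yourself, using completeness of closed subgroups of \(\rho G\) and uniqueness of the Raikov completion. You then obtain the remainder formula from \(\overline H^{\rho G}\cap G=H\) by the same set-theoretic computation as the paper.
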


\begin{proof}
By \cite[Theorem~3.6.14 and Exercise~3.6.m]{ArhangelskiiTkachenko2008}, the inclusion \(H\hookrightarrow G\) extends to a topological isomorphism from \(\rho H\) onto \(\overline H^{\rho G}\). We henceforth regard \(\rho H\) as \(\overline H^{\rho G}\) via this isomorphism.

Since \(H\) is closed in \(G\), one has
\[
        \overline H^{\rho G}\cap G=H.
\]
Therefore
\[
\begin{aligned}
        r_\rho(H)
        &=
        \overline H^{\rho G}\setminus H  \\
        &=
        \overline H^{\rho G}\setminus
        \bigl(\overline H^{\rho G}\cap G\bigr)  \\
        &=
        \overline H^{\rho G}\cap(\rho G\setminus G)  \\
        &=
        \overline H^{\rho G}\cap r_\rho(G).
\end{aligned}
\]
This proves the formula.
\end{proof}

\begin{corollary}
Let \(H\) be a closed subgroup of a topological group \(G\). If \(r_\rho(G)\) is normal as a topological space and pseudocompact, then \(r_\rho(H)\) is pseudocompact.
\end{corollary}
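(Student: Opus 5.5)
The plan is to combine the closed-subgroup formula of Proposition~\ref{prop:closed-subgroup-formula} with Lemma~\ref{lem:closed-normal-pseudo}. By Proposition~\ref{prop:closed-subgroup-formula}, after identifying \(\rho H\) with \(\overline H^{\rho G}\), we have
\[
        r_\rho(H)=\overline H^{\rho G}\cap r_\rho(G).
\]
The set \(\overline H^{\rho G}\) is closed in \(\rho G\). Hence its trace \(\overline H^{\rho G}\cap r_\rho(G)\) is closed in the subspace \(r_\rho(G)\). So \(r_\rho(H)\) is homeomorphic to a closed subspace of \(r_\rho(G)\).

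Next I use the hypothesis that \(r_\rho(G)\) is normal and pseudocompact. Lemma~\ref{lem:closed-normal-pseudo} applies directly to this closed subspace, so \(r_\rho(H)\) is pseudocompact.

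Two points need checking. First, the isomorphism \(\rho H\cong\overline H^{\rho G}\) must carry \(H\) onto \(H\). It does, because it extends the inclusion. Consequently it carries \(\rho H\setminus H\) homeomorphically onto \(\overline H^{\rho G}\setminus H\), and pseudocompactness transfers. Second, the empty case is harmless: if \(r_\rho(H)=\emptyset\), it is pseudocompact by convention.

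There is no real obstacle here. The only delicate step is recognising that the trace of a closed set on the subspace is closed, so that Lemma~\ref{lem:closed-normal-pseudo} applies without any normality requirement on \(\overline H^{\rho G}\) itself.
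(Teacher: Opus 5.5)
Your proposal is correct and follows the paper's proof: you use Proposition~\ref{prop:closed-subgroup-formula} to see that \(r_\rho(H)\) is a closed subspace of \(r_\rho(G)\), and then apply Lemma~\ref{lem:closed-normal-pseudo}. Your extra checks, that the isomorphism fixes \(H\) and that the empty case is harmless, are consistent with the paper's treatment.
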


\begin{proof}
By Proposition~\ref{prop:closed-subgroup-formula}, \(r_\rho(H)\) is a closed subspace of \(r_\rho(G)\). The conclusion follows from Lemma~\ref{lem:closed-normal-pseudo}.
\end{proof}

\begin{corollary}
Let \(H\) be a closed subgroup of a topological group \(G\). If \(r_\rho(G)\) is compact, then \(r_\rho(H)\) is compact. If \(r_\rho(G)\) is countably compact, then \(r_\rho(H)\) is countably compact.
\end{corollary}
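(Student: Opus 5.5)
By Proposition~\ref{prop:closed-subgroup-formula}, once \(\rho H\) is identified with \(\overline H^{\rho G}\), we have
\[
        r_\rho(H)=\overline H^{\rho G}\cap r_\rho(G).
\]
The set \(\overline H^{\rho G}\) is closed in \(\rho G\). Hence \(r_\rho(H)\) is a closed subspace of the subspace \(r_\rho(G)\) of \(\rho G\). The corollary therefore reduces to two standard hereditary facts for closed subspaces.

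For compactness, if \(r_\rho(G)\) is compact, then its closed subspace \(r_\rho(H)\) is compact. This is \cite[Theorem~3.1.2]{Engelking1989}.

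For countable compactness, if \(r_\rho(G)\) is countably compact, then every closed subspace of it is countably compact \cite[Theorem~3.10.4]{Engelking1989}. This is the same fact used in the proof of Lemma~\ref{lem:closed-normal-pseudo}. Applied to \(r_\rho(H)\), it gives the second claim.

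There is no real obstacle here. The only point that needs care is confirming that closedness is taken relative to \(r_\rho(G)\) and not merely relative to \(\rho G\). This holds because \(r_\rho(H)\) is the intersection of \(r_\rho(G)\) with a set closed in \(\rho G\). Note also that, unlike the pseudocompact case, no normality assumption is needed, since compactness and countable compactness are hereditary to all closed subspaces. The empty case is trivially covered.
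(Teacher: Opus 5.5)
Your proof is correct and follows the paper's argument: the closed-subgroup formula makes \(r_\rho(H)=\overline H^{\rho G}\cap r_\rho(G)\) a closed subspace of \(r_\rho(G)\), and compactness and countable compactness pass to closed subspaces. Your check that closedness holds relative to \(r_\rho(G)\), and not merely in \(\rho G\), is the one point that needs attention, and you handle it correctly.
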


\begin{proof}
The formula
\[
        r_\rho(H)
        =
        \overline H^{\rho G}\cap r_\rho(G)
\]
shows that \(r_\rho(H)\) is closed in \(r_\rho(G)\). Compactness and countable compactness pass to closed subspaces.
\end{proof}

\subsection{Finite products}\label{sec:finite-products}

For finite products, we use the following consequence of the general product theorem for Raikov completions.

\begin{lemma}\label{lem:product-completion}
For topological groups \(G\) and \(K\), the natural homomorphism
\[
        \rho(G\times K)\longrightarrow \rho G\times \rho K
\]
is a topological isomorphism.
\end{lemma}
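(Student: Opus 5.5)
The plan is to show that \(\rho G\times\rho K\) is a Raikov-complete group that contains \(G\times K\) as a dense topological subgroup. The conclusion then follows from the uniqueness of the Raikov completion.

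First, \(G\times K\) embeds as a topological subgroup of \(\rho G\times\rho K\) via the product of the two canonical embeddings. It is dense there, because \(G\) is dense in \(\rho G\), \(K\) is dense in \(\rho K\), and a product of dense subsets is dense in the product topology.

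Second, we check that \(\rho G\times\rho K\) is Raikov complete. The two-sided uniformity of a product group is the product of the two-sided uniformities. Indeed, the identity neighbourhoods of the form \(U\times V\) form a base at the identity, and the entourages they generate are exactly the product entourages. A product of two complete uniform spaces is complete: a Cauchy filter on the product projects to Cauchy filters on the factors, these converge, and hence the filter converges to the pair of limits. So \(\rho G\times\rho K\) is Raikov complete. (Alternatively, one may cite \cite[Theorem~3.6.?]{ArhangelskiiTkachenko2008}, the product theorem for Raikov completions.)

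Third, we identify the natural map. By Lemma~\ref{lem:extension-to-completions}, the inclusion \(G\times K\hookrightarrow\rho G\times\rho K\) extends to a continuous homomorphism \(\rho(G\times K)\to\rho G\times\rho K\). This extension is the natural homomorphism in the statement. By the uniqueness of the Raikov completion, namely that any Raikov-complete group containing \(G\times K\) as a dense topological subgroup is topologically isomorphic to \(\rho(G\times K)\) over \(G\times K\), the extension is a topological isomorphism.

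The only step needing care is the identification of the two-sided uniformity of the product with the product uniformity. This step is routine once one notes that the basic neighbourhoods \(U\times V\) give both the left and right uniformities coordinatewise.
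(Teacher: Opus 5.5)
Your proof is correct, but it takes a different route from the paper's. The paper gives no argument of its own. It applies \cite[Corollary~3.6.23]{ArhangelskiiTkachenko2008}, the product theorem for Raikov completions, to a two-point index set.

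You instead prove the two-factor case directly, in four steps:
\begin{itemize}
\item \(G\times K\) is a dense topological subgroup of \(\rho G\times\rho K\);
\item the two-sided uniformity of a product group is the product of the two-sided uniformities, because the entourage determined by \(U\times V\) splits coordinatewise;
\item a product of complete uniform spaces is complete;
\item uniqueness of the Raikov completion finishes the argument.
\end{itemize}
This is essentially the proof behind the cited corollary, specialised to two factors. Your version is self-contained, relying only on uniqueness of \(\rho\) and elementary uniform-space facts. The citation is shorter and covers arbitrary products at once.

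Three small points to tidy up:
\begin{itemize}
\item Replace the placeholder ``Theorem~3.6.?'' by Corollary~3.6.23.
\item Lemma~\ref{lem:extension-to-completions} gives a map into \(\rho(\rho G\times\rho K)\). You may identify this with \(\rho G\times\rho K\) only because of your second step, so that step must come first.
\item Say explicitly that the extension and the isomorphism supplied by uniqueness coincide. Both are continuous, agree on the dense subgroup \(G\times K\), and take values in a Hausdorff group.
\end{itemize}
The same density argument shows that your map equals the pair \((\widehat{p}_G,\widehat{p}_K)\) of extensions of the coordinate projections. So it does not matter which description of ``natural homomorphism'' the statement intends.
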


\begin{proof}
Apply \cite[Corollary~3.6.23]{ArhangelskiiTkachenko2008} to the family indexed by a two-point set, with factors \(G\) and \(K\). The resulting product isomorphism is the displayed natural homomorphism.
\end{proof}

\begin{proposition}\label{prop:product-formula}
For topological groups \(G\) and \(K\),
\[
        r_\rho(G\times K)
        =
        \bigl(r_\rho(G)\times \rho K\bigr)
        \cup
        \bigl(\rho G\times r_\rho(K)\bigr),
\]
where the equality is taken under the natural topological isomorphism
\[
        \rho(G\times K)\cong\rho G\times\rho K.
\]
\end{proposition}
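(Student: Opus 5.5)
We work throughout inside \(\rho G\times\rho K\), using the isomorphism \(\Phi:\rho(G\times K)\to\rho G\times\rho K\) from Lemma~\ref{lem:product-completion}. The first thing to check is how \(\Phi\) treats the copy of \(G\times K\). By construction \(\Phi\) is the extension (Lemma~\ref{lem:extension-to-completions}) of the identity \(G\times K\to G\times K\subseteq\rho G\times\rho K\). Therefore \(\Phi\) restricts to the identity on the dense subgroup \(G\times K\), and
\[
        \Phi(G\times K)=G\times K\subseteq \rho G\times\rho K .
\]
Because \(\Phi\) is a bijection, it maps the complement \(\rho(G\times K)\setminus(G\times K)\) onto \((\rho G\times\rho K)\setminus(G\times K)\). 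This gives
\[
        r_\rho(G\times K)=(\rho G\times\rho K)\setminus(G\times K)
\]
under the identification.

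What remains is elementary set algebra. A pair \((x,y)\in\rho G\times\rho K\) fails to lie in \(G\times K\) exactly when \(x\notin G\) or \(y\notin K\). The first case is \(x\in r_\rho(G)\) with \(y\in\rho K\) arbitrary. The second case is \(y\in r_\rho(K)\) with \(x\in\rho G\) arbitrary. Hence
\[
        (\rho G\times\rho K)\setminus(G\times K)
        =\bigl(r_\rho(G)\times\rho K\bigr)\cup\bigl(\rho G\times r_\rho(K)\bigr),
\]
which is the claimed formula.

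The only step that needs any care is confirming that the natural isomorphism is compatible with the standard embeddings of \(G\times K\). This means checking that the copy of \(G\times K\) inside \(\rho(G\times K)\) corresponds exactly to the product of the copies \(G\subseteq\rho G\) and \(K\subseteq\rho K\). It follows from the uniqueness of continuous extensions, since both maps agree on \(G\times K\). Once this is in place, the rest is the complement computation above.
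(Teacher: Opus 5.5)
Your proposal is correct and follows the paper's proof: you transport along the natural isomorphism of Lemma~\ref{lem:product-completion}, and then you compute the complement of \(G\times K\) in \(\rho G\times\rho K\) coordinatewise. Your extra check that this isomorphism restricts to the identity on \(G\times K\) makes explicit a point the paper uses tacitly.
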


\begin{proof}
By Lemma~\ref{lem:product-completion}, one has
\[
\begin{aligned}
        r_\rho(G\times K)
        &=
        \rho(G\times K)\setminus(G\times K)\\
        &=
        (\rho G\times\rho K)\setminus(G\times K).
\end{aligned}
\]
A point \((x,y)\in\rho G\times\rho K\) lies outside \(G\times K\) precisely when \(x\notin G\) or \(y\notin K\). Thus the complement of \(G\times K\) in \(\rho G\times\rho K\) is exactly
\[
        \bigl(r_\rho(G)\times \rho K\bigr)
        \cup
        \bigl(\rho G\times r_\rho(K)\bigr),
\]
which proves the formula.
\end{proof}

\begin{lemma}[{\cite[Corollary~3.10.27]{Engelking1989}}]\label{lem:pseudo-times-compact}
If \(X\) is pseudocompact and \(C\) is compact, then \(X\times C\) is pseudocompact.
\end{lemma}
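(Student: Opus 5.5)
The statement to prove is Lemma~\ref{lem:pseudo-times-compact}: if \(X\) is pseudocompact and \(C\) is compact, then \(X\times C\) is pseudocompact. All spaces here are Tychonoff, and the empty space counts as pseudocompact. I will argue by contradiction through the characterisation of pseudocompactness by locally finite families of open sets.

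The characterisation is this. A Tychonoff space \(Y\) is pseudocompact if and only if every locally finite family of nonempty open subsets of \(Y\) is finite. Equivalently, every sequence \((W_n)\) of nonempty open sets has a point each of whose neighbourhoods meets infinitely many \(W_n\).

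Suppose \(f\) is an unbounded continuous real function on \(X\times C\). Then we can choose points \((x_n,c_n)\) with \(|f(x_n,c_n)|>n+1\). By continuity there are basic open boxes \(U_n\times V_n\) around these points with \(|f|>n\) on \(U_n\times V_n\). Every point of \(X\times C\) has a neighbourhood on which \(f\) is bounded, so the family \(\{U_n\times V_n\}\) is locally finite.

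To reach a contradiction we apply the characterisation to \(X\), using the sequence \((U_n)\) of nonempty open sets. It gives a point \(x\in X\) every neighbourhood of which meets \(U_n\) for infinitely many \(n\).

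Now use compactness of \(C\), and this is the step that needs care. For each \(c\in C\), local finiteness at \((x,c)\) gives a box \(O_c\times P_c\) that meets only finitely many of the sets \(U_n\times V_n\). Cover \(C\) by finitely many of the \(P_c\), say \(P_{c_1},\dots,P_{c_k}\), and put \(O=\bigcap_i O_{c_i}\). This is the tube-lemma argument. Then \(O\times C\) meets only finitely many of the sets \(U_n\times V_n\).

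Since each \(V_n\) is nonempty, \((O\times C)\cap(U_n\times V_n)\neq\emptyset\) holds exactly when \(O\cap U_n\neq\emptyset\). So \(O\) meets only finitely many \(U_n\), which contradicts the choice of \(x\). Hence every continuous real function on \(X\times C\) is bounded, and \(X\times C\) is pseudocompact.

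The main obstacle is the characterisation of pseudocompactness by locally finite families, which needs the Tychonoff hypothesis. Alternatively, the whole statement can simply be cited as \cite[Corollary~3.10.27]{Engelking1989}.
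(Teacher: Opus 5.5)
Your proof is correct. The paper gives no argument of its own here; it simply quotes Engelking's Corollary~3.10.27. You instead give a self-contained proof, in three steps:
\begin{enumerate}
\item An unbounded continuous \(f\) on \(X\times C\) yields a locally finite sequence of nonempty boxes \(U_n\times V_n\).
\item Pseudocompactness of \(X\) yields a point \(x\) every neighbourhood of which meets infinitely many \(U_n\).
\item The tube lemma on the compact fibre \(\{x\}\times C\) produces a neighbourhood \(O\) of \(x\) meeting only finitely many \(U_n\), which is a contradiction.
\end{enumerate}
Each step checks out, and when \(C=\varnothing\) there is nothing to prove.

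Note that you use only one direction of the locally-finite-family characterization, and only for \(X\). That is the sole place where complete regularity enters, through the bump functions behind that characterization. On \(X\times C\) you verify boundedness of continuous functions directly, so \(X\times C\) being Tychonoff suffices to conclude it is pseudocompact.

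You pass from families to sequences that might repeat sets without comment, but this is harmless. If some \(U_n\) occurs infinitely often, any of its points serves as \(x\).

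What your route buys is transparency: compactness of \(C\) is used only through the tube lemma, and the product corollaries in the paper no longer rest on an external reference. The citation buys brevity, which is all the paper needs.
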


\begin{corollary}\label{cor:two-product-precompact}
Let \(G\) and \(K\) be precompact topological groups. If \(r_\rho(G)\) and \(r_\rho(K)\) are pseudocompact, then \(r_\rho(G\times K)\) is pseudocompact.
\end{corollary}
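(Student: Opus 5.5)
For precompact groups \(G\) and \(K\), the completions \(\rho G\) and \(\rho K\) are compact, since the Raikov completion of a precompact group is compact. I would combine this with the product formula of Proposition~\ref{prop:product-formula}. Under the natural isomorphism \(\rho(G\times K)\cong\rho G\times\rho K\) of Lemma~\ref{lem:product-completion}, that formula writes \(r_\rho(G\times K)\) as the union
\[
        \bigl(r_\rho(G)\times \rho K\bigr)\cup\bigl(\rho G\times r_\rho(K)\bigr).
\]

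By hypothesis \(r_\rho(G)\) is pseudocompact and \(\rho K\) is compact, so Lemma~\ref{lem:pseudo-times-compact} makes \(r_\rho(G)\times\rho K\) pseudocompact. Symmetrically, \(\rho G\times r_\rho(K)\) is pseudocompact; Lemma~\ref{lem:pseudo-times-compact} applies after the homeomorphism swapping the factors. If either remainder is empty, the corresponding piece is empty, which is harmless under the convention that the empty space is pseudocompact.

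It remains to see that a union of two pseudocompact subspaces is pseudocompact. Let \(f\) be a continuous real-valued function on the union \(X=A\cup B\). Its restrictions to \(A\) and to \(B\) are continuous on pseudocompact spaces, hence bounded, so \(f\) is bounded on \(X\). Since \(X\) is a subspace of a Tychonoff space, it is Tychonoff, so pseudocompactness in the sense of bounded continuous functions applies.

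The only step needing care is the compactness of \(\rho G\) and \(\rho K\). This is the standard fact that a topological group is precompact if and only if its Raikov completion is compact \cite{ArhangelskiiTkachenko2008}, and I would simply cite it. Everything else follows directly from the earlier lemmas and Proposition~\ref{prop:product-formula}.
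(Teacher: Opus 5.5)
Your proposal is correct and matches the paper's proof step for step. The route is the same: compactness of \(\rho G\) and \(\rho K\), the product formula of Proposition~\ref{prop:product-formula}, Lemma~\ref{lem:pseudo-times-compact} applied to each of the two pieces, and the observation that a continuous real-valued function on the union is bounded on each piece.
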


\begin{proof}
Since \(G\) and \(K\) are precompact, \(\rho G\) and \(\rho K\) are compact. By Proposition~\ref{prop:product-formula},
\[
        r_\rho(G\times K)
        =
        \bigl(r_\rho(G)\times \rho K\bigr)
        \cup
        \bigl(\rho G\times r_\rho(K)\bigr).
\]
By Lemma~\ref{lem:pseudo-times-compact}, each set on the right is pseudocompact. Every continuous real-valued function on their union is bounded on each of the two sets and hence on the union. Therefore \(r_\rho(G\times K)\) is pseudocompact.
\end{proof}

\begin{corollary}
Let \(n\) be a positive integer, and let \(G_1,\ldots,G_n\) be precompact topological groups. If \(r_\rho(G_i)\) is pseudocompact for every \(i\in\{1,\ldots,n\}\), then
\[
        r_\rho(G_1\times\cdots\times G_n)
\]
is pseudocompact.
\end{corollary}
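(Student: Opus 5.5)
Induction on \(n\), with Corollary~\ref{cor:two-product-precompact} as the inductive step. For \(n=1\) there is nothing to prove. Suppose \(n\ge 2\) and that the assertion holds for \(n-1\) factors. Put
\[
        G'=G_1\times\cdots\times G_{n-1}.
\]
Then \(G'\) is precompact, since a finite product of precompact groups is precompact (equivalently, \(\rho G'\cong\rho G_1\times\cdots\times\rho G_{n-1}\) is compact, by iterating Lemma~\ref{lem:product-completion}). By the induction hypothesis, \(r_\rho(G')\) is pseudocompact.

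Next I would identify \(G_1\times\cdots\times G_n\) with \(G'\times G_n\) via the canonical topological isomorphism. A topological isomorphism \(\varphi:A\to B\) extends, by Lemma~\ref{lem:extension-to-completions} applied to \(\varphi\) and to \(\varphi^{-1}\), to a topological isomorphism \(\widehat\varphi:\rho A\to\rho B\). By uniqueness of extensions, \(\widehat\varphi\) and \(\widehat{\varphi^{-1}}\) are mutually inverse. Hence \(\widehat\varphi\) maps \(A\) onto \(B\), and so maps \(r_\rho(A)\) homeomorphically onto \(r_\rho(B)\). Consequently \(r_\rho(G_1\times\cdots\times G_n)\) is homeomorphic to \(r_\rho(G'\times G_n)\).

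Finally, \(G'\) and \(G_n\) are precompact and both have pseudocompact Raikov remainders, so Corollary~\ref{cor:two-product-precompact} gives that \(r_\rho(G'\times G_n)\) is pseudocompact. Pseudocompactness is a topological invariant, which completes the induction.

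The only step needing any care is the transfer of remainders along the associativity isomorphism. This is routine once the extension and its inverse are seen to exist and to preserve the copies of the groups. Alternatively, one could bypass it by iterating Proposition~\ref{prop:product-formula} to write the remainder as \(\bigcup_i \rho G_1\times\cdots\times r_\rho(G_i)\times\cdots\times\rho G_n\). Each of these pieces is pseudocompact by Lemma~\ref{lem:pseudo-times-compact}, and a finite union of pseudocompact subspaces is pseudocompact.
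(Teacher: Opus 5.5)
Your proof is correct and is the same induction the paper uses, with Corollary~\ref{cor:two-product-precompact} applied to \(G_1\times\cdots\times G_{n-1}\) and \(G_n\) as the inductive step. Your extra justifications, namely the precompactness of \(G'\) and the transport of remainders along the associativity isomorphism, fill in details that the paper leaves implicit.
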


\begin{proof}
When \(n=1\), the pseudocompactness of \(r_\rho(G_1)\) is part of the hypothesis. Suppose the assertion holds for \(n-1\), and put
\[
        P=G_1\times\cdots\times G_{n-1}.
\]
Then \(P\) is precompact and \(r_\rho(P)\) is pseudocompact by the induction hypothesis. Applying Corollary~\ref{cor:two-product-precompact} to \(P\) and \(G_n\) completes the induction.
\end{proof}

For finite products of precompact groups, pseudocompactness therefore follows from the product formula and the fact that products with compact spaces preserve pseudocompactness. By contrast, for a quotient homomorphism, the induced map between the corresponding Raikov completions need not be surjective and may map points of the Raikov remainder of the domain group into the quotient group itself.

\subsection{Quotient homomorphisms}\label{sec:quotients}

Let \(G\) be a topological group, let \(N\) be a closed normal subgroup of \(G\), and let
\[
        q:G\longrightarrow G/N
\]
be the quotient homomorphism. By Lemma~\ref{lem:extension-to-completions}, \(q\) induces a continuous homomorphism
\[
        \widehat q:\rho G\longrightarrow \rho(G/N).
\]
Since \(q\) is onto, \(\widehat q\) has dense image in \(\rho(G/N)\), but it need not be onto. A second issue is whether a point of \(r_\rho(G)\) can map into \(G/N\). The inclusion
\[
        G\subseteq \widehat q^{-1}(G/N)
\]
always holds. Equality is equivalent to saying that
\(\widehat q\bigl(r_\rho(G)\bigr)\cap(G/N)=\varnothing\).

\begin{proposition}\label{prop:quotient-criterion}
Let \(N\) be a closed normal subgroup of a topological group \(G\). Let \(q:G\to G/N\) be the quotient homomorphism, and let
\[
        \widehat q:\rho G\to\rho(G/N)
\]
be its extension. Then
\[
        r_\rho(G/N)=\widehat q\bigl(r_\rho(G)\bigr)
\]
if and only if \(\widehat q\) is onto and
\[
        \widehat q^{-1}(G/N)=G.
\]
Under these two conditions, pseudocompactness of \(r_\rho(G)\) implies pseudocompactness of \(r_\rho(G/N)\).
\end{proposition}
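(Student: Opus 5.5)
The argument is elementary set-theoretic bookkeeping about the map \(\widehat q\); no completion theory beyond Lemma~\ref{lem:extension-to-completions} should be needed. The key identity is
\[
        \widehat q(G)=q(G)=G/N,
\]
since \(\widehat q\) extends \(q\) and \(q\) is onto. We also use the decomposition \(\rho G=G\cup r_\rho(G)\), which gives
\[
        \widehat q(\rho G)=(G/N)\cup\widehat q\bigl(r_\rho(G)\bigr).
\]

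For the forward implication, assume \(r_\rho(G/N)=\widehat q(r_\rho(G))\).
\begin{itemize}
\item \emph{Surjectivity.} Substituting into the display gives \(\widehat q(\rho G)=(G/N)\cup r_\rho(G/N)=\rho(G/N)\), so \(\widehat q\) is onto.
\item \emph{Inverse-image condition.} Let \(x\in\rho G\) with \(\widehat q(x)\in G/N\). If \(x\notin G\), then \(x\in r_\rho(G)\), so \(\widehat q(x)\in r_\rho(G/N)\), which is disjoint from \(G/N\). This is a contradiction, so \(x\in G\). Combined with the inclusion \(G\subseteq\widehat q^{-1}(G/N)\) noted before the proposition, this gives \(\widehat q^{-1}(G/N)=G\).
\end{itemize}

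For the converse, assume \(\widehat q\) is onto and \(\widehat q^{-1}(G/N)=G\).
\begin{itemize}
\item \emph{Inclusion \(\widehat q(r_\rho(G))\subseteq r_\rho(G/N)\).} If \(x\in r_\rho(G)\), then \(x\notin G=\widehat q^{-1}(G/N)\), so \(\widehat q(x)\notin G/N\), that is, \(\widehat q(x)\in r_\rho(G/N)\).
\item \emph{Inclusion \(r_\rho(G/N)\subseteq\widehat q(r_\rho(G))\).} Given \(y\in r_\rho(G/N)\), surjectivity yields \(x\in\rho G\) with \(\widehat q(x)=y\). Then \(x\notin G\), since otherwise \(y=q(x)\in G/N\). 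Hence \(x\in r_\rho(G)\).
\end{itemize}

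For the final assertion, under the two conditions the restriction \(\widehat q|_{r_\rho(G)}\colon r_\rho(G)\to r_\rho(G/N)\) is a continuous surjection, being a restriction of the continuous map \(\widehat q\) with image equal to \(r_\rho(G/N)\) by the equivalence just proved. Lemma~\ref{lem:continuous-image} then transfers pseudocompactness. If \(r_\rho(G)\) is empty, then \(r_\rho(G/N)\) is empty as well, and the empty space is pseudocompact by convention, so this case causes no difficulty.

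There is no real obstacle here. The only point needing care is remembering that \(\widehat q(G)=G/N\) exactly, which uses surjectivity of \(q\), in the first step.
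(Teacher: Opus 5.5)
Your proof is correct and follows the paper's argument essentially step for step. Both directions rest on \(\widehat q(G)=G/N\) and the decomposition \(\rho G=G\cup r_\rho(G)\), and the pseudocompactness claim follows from Lemma~\ref{lem:continuous-image}; your remark on the empty case is a harmless addition.
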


\begin{proof}
Suppose first that \(\widehat q\) is onto and that
\[
        \widehat q^{-1}(G/N)=G.
\]
Let \(y\in r_\rho(G/N)\). Choose \(x\in\rho G\) with \(\widehat q(x)=y\). Since \(y\notin G/N\), one has \(x\notin G\), and hence
\[
        y\in\widehat q\bigl(r_\rho(G)\bigr).
\]
Conversely, if \(x\in r_\rho(G)\), then the preimage condition gives \(\widehat q(x)\notin G/N\). Therefore
\[
        \widehat q\bigl(r_\rho(G)\bigr)\subseteq r_\rho(G/N),
\]
and the required equality follows.

Now suppose that
\[
        r_\rho(G/N)=\widehat q\bigl(r_\rho(G)\bigr).
\]
Since \(\widehat q(G)=G/N\), we have
\[
\begin{aligned}
        \widehat q(\rho G)
        &=\widehat q(G)\cup\widehat q\bigl(r_\rho(G)\bigr)\\
        &=(G/N)\cup r_\rho(G/N)\\
        &=\rho(G/N).
\end{aligned}
\]
Thus \(\widehat q\) is onto. If
\(x\in\widehat q^{-1}(G/N)\setminus G\), then \(x\in r_\rho(G)\), so
\[
        \widehat q(x)\in\widehat q\bigl(r_\rho(G)\bigr)=r_\rho(G/N),
\]
contrary to \(\widehat q(x)\in G/N\). Hence
\(\widehat q^{-1}(G/N)=G\).

The last assertion follows from Lemma~\ref{lem:continuous-image}.
\end{proof}

The proposition shows that both conditions are necessary. Dense image does not imply surjectivity, and surjectivity alone does not prevent points of \(\rho G\setminus G\) from mapping into \(G/N\).

\subsection{A general completion criterion}\label{sec:criterion}

The quotient argument is based on the following completion criterion. Closely related completion facts occur in the proof of \cite[Theorem~4.5]{MoralesTkachenko2016}. Under the hypotheses of that theorem, Morales and Tkachenko note, using the \v{C}ech-completeness of \(H\), that \((\rho X)/H\) is the Raikov completion of \(X/H\). In the direction where \(X/H\) is assumed to be strongly Dieudonn\'e complete, they also use
\[
        X=\varphi^{-1}(X/H)
\]
for the quotient homomorphism
\(\varphi:\rho X\to(\rho X)/H\).

In the abelian case, Bello, Chasco, Dom\'inguez, and Tkachenko show that the Raikov completion of a quotient is naturally isomorphic to the corresponding quotient of completions when the Raikov completion of the closed subgroup is \v{C}ech-complete \cite[Proposition~3.9]{BelloChascoDominguezTkachenko2016}. Our quotient-remainder criterion determines exactly when \(\widehat q\) carries \(r_\rho(G)\) onto \(r_\rho(G/N)\). Surjectivity and the inverse-image condition are separate requirements. The examples below show that neither requirement implies the other, while the main theorem verifies both for Raikov-complete almost metrizable kernels.

\begin{lemma}\label{lem:complete-kernel-closed-in-completion}
Let \(L\) be a Hausdorff topological group containing \(G\) as a dense subgroup, and let \(N\) be a Raikov-complete normal subgroup of \(G\). Then \(N\) is a closed normal subgroup of \(L\).
\end{lemma}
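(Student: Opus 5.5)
Two things need proof: that \(N\) is closed in \(L\), and that \(N\) is normal in \(L\). Both will come from the completeness of \(N\) and the density of \(G\) in \(L\).

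For closedness I would use the standard fact that a Raikov-complete subgroup is closed in every Hausdorff group containing it as a topological subgroup. The argument runs as follows. Let \(x\in\overline N^{L}\) and pick a net \((n_\alpha)\) in \(N\) with \(n_\alpha\to x\) in \(L\). A convergent net in \(L\) is Cauchy for the two-sided uniformity of \(L\). Because \(N\) carries the subspace topology (\(N\subseteq G\subseteq L\) topologically), the two-sided uniformity of \(N\) is the restriction of that of \(L\), so \((n_\alpha)\) is two-sided Cauchy in \(N\). Raikov completeness of \(N\) then gives \(n\in N\) with \(n_\alpha\to n\) in \(N\), hence in \(L\). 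Since \(L\) is Hausdorff, \(x=n\in N\). If one prefers citations, this is the assertion in \cite{ArhangelskiiTkachenko2008} that Raikov-complete groups are absolutely closed, or it follows from Proposition~\ref{prop:closed-subgroup-formula} once \(\rho N=N\) is identified with its closure.

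For normality, fix \(x\in L\) and \(n\in N\), and take a net \((g_\alpha)\) in \(G\) with \(g_\alpha\to x\). Since \(N\) is normal in \(G\), each \(g_\alpha n g_\alpha^{-1}\) lies in \(N\). By continuity of the group operations, \(g_\alpha n g_\alpha^{-1}\to xnx^{-1}\) in \(L\). Hence \(xnx^{-1}\in\overline N^{L}=N\) by the first step. So \(xNx^{-1}\subseteq N\) for every \(x\in L\). Applying this to \(x^{-1}\) gives equality, and \(N\) is normal in \(L\).

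The only step needing care is the uniform-structure compatibility in the closedness argument. One must check that the two-sided uniformity of the subgroup \(N\) coincides with the trace on \(N\) of the two-sided uniformity of \(L\). This holds because identity neighbourhoods of \(N\) are exactly the traces \(U\cap N\) of identity neighbourhoods \(U\) of \(L\). Everything else is routine.
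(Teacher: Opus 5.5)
Your proposal is correct and follows essentially the same route as the paper. In both, closedness comes from a net in \(N\) converging in \(L\); it is two-sided Cauchy in \(N\) because the subgroup uniformity is the trace of the ambient one, so Raikov completeness and the Hausdorff property give the limit in \(N\), and normality then follows from \(g_\alpha n g_\alpha^{-1}\to xnx^{-1}\) together with that closedness. Your side remark invoking Proposition~\ref{prop:closed-subgroup-formula} would need an extra step identifying \(\rho L\) with \(\rho G\), since that proposition concerns closures in \(\rho G\) rather than in an arbitrary \(L\), but your main argument does not depend on it.
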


\begin{proof}
By \cite[Proposition~1.8.4]{ArhangelskiiTkachenko2008}, the two-sided uniformity of a subgroup is the restriction of the two-sided uniformity of the ambient group. Hence the two-sided uniformity of \(N\) is the one induced from \(L\). Let \(x\in\overline N^{L}\), and choose a net \((n_i)\) in \(N\) that converges to \(x\) in \(L\). This net is Cauchy in \(N\). Since \(N\) is Raikov complete, it converges in \(N\) to some \(n\in N\). As a net in the Hausdorff group \(L\), it converges to both \(x\) and \(n\). Thus \(x=n\), and \(N\) is closed in \(L\).

It remains to prove that \(N\) is normal in \(L\). Let \(x\in L\) and \(n\in N\). Choose a net \((g_i)\) in \(G\) converging to \(x\). Then
\[
        g_i n g_i^{-1}\in N
\]
for every \(i\), because \(N\) is normal in \(G\). The net \((g_i n g_i^{-1})\) converges to \(xnx^{-1}\). Since \(N\) is closed in \(L\), one has \(xnx^{-1}\in N\). Thus \(xNx^{-1}\subseteq N\). Applying the same argument to \(x^{-1}\) gives equality.
\end{proof}

\begin{lemma}\label{lem:subgroup-quotient-embedding}
Let \(L\) be a topological group, let \(K\) be a normal subgroup of \(L\), and let \(G\) be a subgroup of \(L\) containing \(K\). Let
\[
        \pi:L\to L/K
\]
be the quotient map. Then the natural homomorphism
\[
        G/K\longrightarrow \pi(G)
\]
is a topological isomorphism, where \(\pi(G)\) carries the subspace topology inherited from \(L/K\).
\end{lemma}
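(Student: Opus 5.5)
The natural homomorphism \(\varphi:G/K\to\pi(G)\), \(gK\mapsto\pi(g)\), is well defined and injective because \(K\subseteq G\), and it is clearly onto \(\pi(G)\). So the plan is to check that \(\varphi\) is continuous and open onto its image, using that both quotient maps are open. Write \(p:G\to G/K\) for the quotient map of \(G\). Then \(\varphi\circ p=\pi|_G\), and \(K\) is normal in \(G\) because it is normal in \(L\).

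\emph{Continuity.} Since \(\pi|_G:G\to\pi(G)\subseteq L/K\) is continuous and \(p\) is a quotient map, the identity \(\varphi\circ p=\pi|_G\) shows that \(\varphi\) is continuous. Equivalently, if \(W\) is open in \(\pi(G)\), then \(p^{-1}(\varphi^{-1}(W))=(\pi|_G)^{-1}(W)\) is open in \(G\), so \(\varphi^{-1}(W)\) is open in \(G/K\).

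\emph{Openness onto the image.} This is the only step needing care. Let \(V\) be open in \(G\). Since \(\varphi\) is a bijection and \(p\) is onto, it suffices to show that \(\varphi(p(V))=\pi(V)\) is open in \(\pi(G)\). Choose an open set \(U\) of \(L\) with \(V=U\cap G\). I claim
\[
\pi(V)=\pi(UK)\cap\pi(G).
\]
The inclusion \(\subseteq\) is clear. For the reverse, let \(\pi(g)=\pi(uk)\) with \(g\in G\), \(u\in U\) and \(k\in K\). Then \(g\in ukK=uK\), so \(u\in gK\subseteq G\) because \(K\subseteq G\). Hence \(u\in U\cap G=V\) and \(\pi(g)=\pi(u)\in\pi(V)\).

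The set \(UK\) is open in \(L\), and \(\pi\) is an open map, so \(\pi(UK)\) is open in \(L/K\). Therefore \(\pi(V)\) is open in the subspace \(\pi(G)\). Thus \(\varphi\) is a continuous open bijective homomorphism, hence a topological isomorphism. The key point is the saturation step: it is exactly where the hypothesis \(K\subseteq G\) is used.
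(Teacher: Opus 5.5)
Your proof is correct and is essentially the paper's argument. Continuity comes from the quotient property of \(G\to G/K\). Openness comes from writing \(V=U\cap G\) and using \(K\subseteq G\) to get \(\pi(V)=\pi(U)\cap\pi(G)\), then applying openness of \(\pi\); your \(\pi(UK)\) equals \(\pi(U)\), so saturating by \(K\) is harmless but not needed.
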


\begin{proof}
The natural homomorphism is an algebraic isomorphism and is continuous by the definition of the quotient topology on \(G/K\). To prove that it is open, let \(U\) be open in \(G\), and choose an open set \(V\) in \(L\) such that \(U=V\cap G\). Since \(K\subseteq G\),
\[
        \pi(U)=\pi(V)\cap\pi(G).
\]
The quotient map \(\pi\) is open, so \(\pi(U)\) is open in \(\pi(G)\). Hence the natural homomorphism is open and therefore a topological isomorphism.
\end{proof}

\begin{proposition}\label{prop:general-criterion}
Let \(N\) be a closed normal subgroup of a topological group \(G\). Suppose that \(N\) is a closed normal subgroup of \(\rho G\) and that \((\rho G)/N\) is Raikov complete. Then the extension
\[
        \widehat q:\rho G\to\rho(G/N)
\]
of the quotient homomorphism \(q:G\to G/N\) is a quotient homomorphism with kernel \(N\) and satisfies
\[
        \widehat q^{-1}(G/N)=G.
\]
Consequently,
\[
        r_\rho(G/N)=\widehat q\bigl(r_\rho(G)\bigr).
\]
\end{proposition}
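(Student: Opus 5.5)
The plan is to identify \(\rho(G/N)\) with \((\rho G)/N\). Put \(L=\rho G\) and let \(\pi:L\to L/N\) be the quotient homomorphism. Since \(N\subseteq G\subseteq L\), Lemma~\ref{lem:subgroup-quotient-embedding} shows that the natural map \(G/N\to\pi(G)\) is a topological isomorphism onto a subgroup of \(L/N\). Because \(G\) is dense in \(L\) and \(\pi\) is continuous and onto, \(\pi(G)\) is dense in \(L/N\). By hypothesis \(L/N\) is Raikov complete, so it is a Raikov-complete group containing a dense copy of \(G/N\). By uniqueness of the Raikov completion, there is a topological isomorphism \(\varphi:\rho(G/N)\to L/N\) that restricts on \(G/N\) to the natural map \(gN\mapsto \pi(g)\).

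Next we identify \(\widehat q\). The map \(\varphi^{-1}\circ\pi:\rho G\to\rho(G/N)\) is a continuous homomorphism. On \(G\) it agrees with \(q\), since \(\pi(g)\) corresponds to \(gN=q(g)\). By the uniqueness clause of Lemma~\ref{lem:extension-to-completions}, or simply because two continuous maps into a Hausdorff space that agree on a dense set coincide, \(\widehat q=\varphi^{-1}\circ\pi\). Hence \(\widehat q\) is a composition of a quotient (open, continuous, surjective) homomorphism with a topological isomorphism. So it is a quotient homomorphism, and its kernel is \(\ker\pi=N\). In particular \(\widehat q\) is onto.

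For the inverse-image condition, let \(x\in\rho G\) with \(\widehat q(x)\in G/N\). Then \(\widehat q(x)=q(g)=\widehat q(g)\) for some \(g\in G\), so \(g^{-1}x\in\ker\widehat q=N\subseteq G\), and therefore \(x\in G\). Together with the trivial inclusion \(G\subseteq\widehat q^{-1}(G/N)\), this gives \(\widehat q^{-1}(G/N)=G\). The remainder identity \(r_\rho(G/N)=\widehat q(r_\rho(G))\) then follows directly from Proposition~\ref{prop:quotient-criterion}.

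The only step requiring care is the identification of \(\rho(G/N)\) with \((\rho G)/N\), which needs three ingredients. First, \((\rho G)/N\) is Hausdorff, which holds because \(N\) is closed in \(\rho G\). Second, the copy of \(G/N\) inside it is a topological subgroup and not merely a continuous bijective image; this is exactly what Lemma~\ref{lem:subgroup-quotient-embedding} supplies. Third, the universal property of the Raikov completion yields an isomorphism compatible with the embedding, which we take from the standard characterization of \(\rho\) as the unique Raikov-complete group containing the given group as a dense topological subgroup. Everything else is routine.
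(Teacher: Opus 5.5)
Your proposal is correct and follows essentially the same route as the paper. It identifies \((\rho G)/N\) with \(\rho(G/N)\) via Lemma~\ref{lem:subgroup-quotient-embedding}, density of \(\pi(G)\), and uniqueness of the Raikov completion, writes \(\widehat q\) as \(\pi\) composed with the resulting isomorphism, and gets the remainder formula from Proposition~\ref{prop:quotient-criterion}; the only cosmetic difference is that you derive \(\widehat q^{-1}(G/N)=G\) from \(\ker\widehat q=N\subseteq G\) rather than from \(\Phi^{-1}(G/N)=\pi(G)\), which is the same argument.
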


\begin{proof}
Let
\[
        \pi:\rho G\to (\rho G)/N
\]
be the quotient homomorphism. Since \(G\) is dense in \(\rho G\), its image \(\pi(G)\) is dense in \((\rho G)/N\). By Lemma~\ref{lem:subgroup-quotient-embedding}, the natural map from \(G/N\) onto \(\pi(G)\) is a topological isomorphism. A topological group isomorphism and its inverse are uniformly continuous for the two-sided uniformities. Thus this map is a uniform isomorphism. Since \((\rho G)/N\) is Raikov complete by hypothesis, it is a Raikov completion of \(G/N\).

Uniqueness of the Raikov completion gives a topological isomorphism
\[
        \Phi:(\rho G)/N\longrightarrow\rho(G/N)
\]
whose restriction to \(\pi(G)\) is the natural isomorphism onto \(G/N\), viewed as a subgroup of \(\rho(G/N)\). Both \(\Phi\circ\pi\) and \(\widehat q\) extend \(q\). They therefore agree on the dense subgroup \(G\) and hence on all of \(\rho G\). Since \(\pi\) is a quotient homomorphism with kernel \(N\) and \(\Phi\) is a topological isomorphism, \(\widehat q=\Phi\circ\pi\) is a quotient homomorphism with kernel \(N\).

It remains to show that
\[
        \widehat q^{-1}(G/N)=G.
\]
The restriction of \(\Phi\) maps \(\pi(G)\) onto \(G/N\), so
\[
        \Phi^{-1}(G/N)=\pi(G).
\]
If \(x\in\rho G\) and \(\widehat q(x)\in G/N\), then
\(\pi(x)\in\pi(G)\). Thus \(\pi(x)=\pi(g)\) for some \(g\in G\). Hence \(g^{-1}x\in N\subseteq G\), and therefore \(x\in G\). Conversely, since \(\widehat q\) extends \(q\), one has \(\widehat q(x)=q(x)\in G/N\) for every \(x\in G\). Thus
\[
        \widehat q^{-1}(G/N)=G.
\]
The remainder formula follows from Proposition~\ref{prop:quotient-criterion}.
\end{proof}

For a Raikov-complete normal subgroup \(N\) of \(G\), Lemma~\ref{lem:complete-kernel-closed-in-completion} supplies the closedness and normality in \(\rho G\) required above. The remaining issue is the Raikov completeness of \((\rho G)/N\).

\subsection{Raikov-complete almost metrizable kernels}\label{sec:am-kernels}

For almost metrizable kernels, the remaining hypothesis follows from known quotient-completeness results. No precompactness assumption is imposed on \(G\), and the kernel need not be compact.

Leischner's theorem concerns neutral subgroups and completeness for the supremum of the induced left and right quotient uniformities. The next lemma applies it to normal almost metrizable kernels, together with the characterization of \v{C}ech-complete almost metrizable groups. Quotients by arbitrary closed normal subgroups need not be Raikov complete \cite{Leischner1991,Leischner1993}. Related local properties of quotient maps by locally compact subgroups were studied by Arhangel'skii \cite{Arhangelskii2005}.

\begin{lemma}\label{lem:am-quotient-complete}
Let \(L\) be a Raikov-complete topological group, and let \(K\) be a closed normal subgroup of \(L\) that is both Raikov complete and almost metrizable. Then \(L/K\) is Raikov complete.
\end{lemma}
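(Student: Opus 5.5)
The plan is to reduce the statement to Leischner's theorem on neutral subgroups \cite{Leischner1993}. That theorem says, roughly, that if \(L\) is complete for its two-sided uniformity and \(K\) is a closed neutral subgroup that is \v{C}ech-complete, or more generally inframetrizable, then \(L/K\) is complete for the supremum of the left and right quotient uniformities. Three things must be checked: the hypotheses on \(K\) match Leischner's, the uniformity in his conclusion is the two-sided uniformity of the topological group \(L/K\), and this yields Raikov completeness of \(L/K\).

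\textbf{Step 1: \(K\) is \v{C}ech-complete.} An almost metrizable group \(K\) contains a compact subgroup \(C\) with \(K/C\) metrizable, by the definition recalled in Section~\ref{sec:preliminaries}. Its Raikov completion \(\rho K\) contains \(C\) as a compact subgroup with a countable neighbourhood base, so \(\rho K\) is almost metrizable and Raikov complete. By the characterization of \v{C}ech-complete groups (\cite[Theorem~4.3.15 and Lemma~4.3.19]{ArhangelskiiTkachenko2008}), a topological group is \v{C}ech-complete if and only if it is Raikov complete and almost metrizable. Since \(K\) is Raikov complete, \(K=\rho K\) is \v{C}ech-complete, and in particular inframetrizable in the sense of \cite{RoelckeDierolf1981}. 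In addition, \(K\) is neutral in \(L\) because it is normal (Section~\ref{sec:preliminaries}), and \(K\) is closed by hypothesis.

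\textbf{Step 2: identify the uniformities.} For normal \(K\), the left quotient uniformity on \(L/K\) has entourages \(\{(xK,yK): x^{-1}y\in UK\}\). Since \(L\to L/K\) is an open homomorphism, these are exactly the entourages of the left uniformity of the topological group \(L/K\), and the same holds for the right uniformity. So the supremum of the left and right quotient uniformities is the two-sided uniformity of \(L/K\). Completeness of \(L/K\) for the two-sided uniformity is Raikov completeness, so Leischner's theorem gives the lemma.

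\textbf{Main obstacle.} I expect the difficulty to lie in the exact form of Leischner's hypotheses, namely whether he requires \(K\) to be \v{C}ech-complete, locally compact, or merely inframetrizable, and whether he needs \(L\) itself to be complete. If his theorem is stated only for locally compact \(K\), I would fall back on a direct argument. Take a two-sided Cauchy filter \(\mathcal F\) on \(L/K\). Use a countable base \(\{U_n\}\) at the compact subgroup \(C\subseteq K\), and normality together with the two-sided Cauchy condition, to choose a sequence of elements \(x_n\in L\) whose images are close in \(L/K\). Then correct them by elements of \(K\) so that the corrected sequence is Cauchy in \(L\) modulo the compact subgroup \(C\). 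Completeness of \(L\) and compactness of \(C\) then produce a cluster point, whose image is a limit of \(\mathcal F\). The correction step is where \v{C}ech-completeness (a countable base at a compact set) is essential, and it is the delicate part.
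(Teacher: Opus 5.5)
Your proposal is correct and follows essentially the same route as the paper: \v{C}ech-completeness of \(K\) from Raikov completeness plus almost metrizability, neutrality from normality, Leischner's theorem for \v{C}ech-complete neutral subgroups of a Raikov-complete group (his statement (***)), and the identification of the supremum of the left and right quotient uniformities with the two-sided uniformity of \(L/K\). One small correction: in Leischner's inframetrizable version the inframetrizability hypothesis is on the ambient group \(L\), not on \(K\); since you only use the \v{C}ech-complete case, this does not affect your argument, and the fallback direct argument is unnecessary.
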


\begin{proof}
Since \(K\) is almost metrizable and Raikov complete, it is \v{C}ech-complete \cite[Theorem~4.3.15]{ArhangelskiiTkachenko2008}. As a normal subgroup, \(K\) is neutral. Leischner's theorem shows that \(L/K\) is complete for the supremum of the induced left and right quotient uniformities \cite[statement~(***)]{Leischner1993}. Because \(K\) is normal, this supremum is the two-sided uniformity of \(L/K\). Hence \(L/K\) is Raikov complete.
\end{proof}

By \cite[Lemma~13.13, p.~242]{RoelckeDierolf1981}, every locally compact topological group is almost metrizable in the sense used here. Thus locally compact kernels satisfy the almost-metrizability hypothesis in Lemma~\ref{lem:am-quotient-complete}. We shall also use the following standard completeness theorem.

\begin{lemma}[{\cite[Theorem~3.6.24]{ArhangelskiiTkachenko2008}}]\label{lem:lc-complete}
Every locally compact topological group is Raikov complete.
\end{lemma}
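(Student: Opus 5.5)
The statement is Lemma~\ref{lem:lc-complete}: every locally compact topological group \(G\) is Raikov complete. Since the paper quotes it from \cite[Theorem~3.6.24]{ArhangelskiiTkachenko2008}, only a short argument is needed. The plan is to regard \(G\) as a dense subgroup of its Raikov completion \(\rho G\) and show that \(G\) is closed in \(\rho G\). Then \(G=\rho G\), which is exactly Raikov completeness.

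Fix a compact neighbourhood \(C\) of the identity in \(G\). First, because \(G\) carries the subspace topology from \(\rho G\), there is an open set \(W\) of \(\rho G\) with \(W\cap G\subseteq C\) and \(e\in W\cap G\). Second, density of \(G\) gives
\[
        W\subseteq\overline{W\cap G}^{\rho G}\subseteq\overline C^{\rho G}=C,
\]
where the last equality holds because \(C\) is compact in the Hausdorff group \(\rho G\) and hence closed. So \(C\) contains a neighbourhood \(W\) of \(e\) in \(\rho G\), and therefore \(G\) contains a nonempty open subset of \(\rho G\).

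Next, a subgroup containing a nonempty open set of the ambient group is open: \(G=\bigcup_{g\in G} gW\). Every open subgroup is also closed, since its complement is a union of cosets, each open. Combined with density, \(G=\rho G\).

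The only step that needs care is the density inclusion \(W\subseteq\overline{W\cap G}\). It holds because for an open \(W\) and a dense \(G\), every neighbourhood of a point of \(W\) meets \(W\cap G\). The remaining steps are routine. Alternatively, one can argue directly with Cauchy filters. A Cauchy filter on \(G\) contains a small set \(xC'\) with \(C'\) compact, and a Cauchy filter on a compact set converges. This route is essentially the same argument, so I would present the closedness version above.
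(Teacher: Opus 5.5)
Your proof is correct: since \(C\) is compact and hence closed in the Hausdorff group \(\rho G\), density gives \(W\subseteq\overline{W\cap G}^{\rho G}\subseteq C\subseteq G\). Thus \(G\) is an open, hence closed, dense subgroup of \(\rho G\) and equals it. The paper gives no argument of its own for this lemma, only the citation to Arhangel'skii--Tkachenko, Theorem~3.6.24, and your self-contained argument (the standard proof that a locally compact subgroup of a Hausdorff group is closed, applied to \(G\subseteq\rho G\)) correctly supplies what that citation provides.
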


Quotient completeness now follows from the preceding lemmas. Propositions~\ref{prop:general-criterion} and~\ref{prop:quotient-criterion} then give the following theorem on Raikov remainders.

\begin{theorem}\label{thm:complete-am-kernel}
Let \(G\) be a topological group, and let \(N\) be a closed normal subgroup of \(G\). Suppose that \(N\) is Raikov complete and almost metrizable. Then the extension
\[
        \widehat q:\rho G\to\rho(G/N)
\]
of the quotient homomorphism \(q:G\to G/N\) is a quotient homomorphism with kernel \(N\) and satisfies
\[
        \widehat q^{-1}(G/N)=G.
\]
Consequently,
\[
        r_\rho(G/N)=\widehat q(r_\rho(G)).
\]
In particular, if \(r_\rho(G)\) is pseudocompact, then \(r_\rho(G/N)\) is pseudocompact.
\end{theorem}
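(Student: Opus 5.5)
The plan is to reduce the theorem to Proposition~\ref{prop:general-criterion}, so we only need to check its two hypotheses. These are that \(N\) is a closed normal subgroup of \(\rho G\), and that \((\rho G)/N\) is Raikov complete.

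First, we apply Lemma~\ref{lem:complete-kernel-closed-in-completion} with \(L=\rho G\). This is legitimate because \(\rho G\) is a Hausdorff topological group containing \(G\) as a dense subgroup, and \(N\) is a Raikov-complete normal subgroup of \(G\). The lemma gives that \(N\) is a closed normal subgroup of \(\rho G\).

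Next, we apply Lemma~\ref{lem:am-quotient-complete} with \(L=\rho G\) and \(K=N\). Here \(\rho G\) is Raikov complete by construction, and \(N\) is closed and normal in \(\rho G\) by the previous step. Moreover, \(N\) is Raikov complete and almost metrizable by hypothesis. One point deserves a remark. Raikov completeness and almost metrizability are intrinsic properties of the topological group \(N\), and the topology \(N\) inherits from \(\rho G\) coincides with its topology as a subgroup of \(G\), since \(G\) carries the subspace topology of \(\rho G\). So the hypotheses of the lemma hold verbatim, and \((\rho G)/N\) is Raikov complete.

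Proposition~\ref{prop:general-criterion} now yields everything except the last sentence. It shows that \(\widehat q\) is a quotient homomorphism with kernel \(N\), that \(\widehat q^{-1}(G/N)=G\), and that \(r_\rho(G/N)=\widehat q(r_\rho(G))\). For the pseudocompactness clause, the remainder formula exhibits \(r_\rho(G/N)\) as the image of \(r_\rho(G)\) under the continuous map \(\widehat q|_{r_\rho(G)}\). Lemma~\ref{lem:continuous-image} then applies, or equivalently the final assertion of Proposition~\ref{prop:quotient-criterion}, since both of its conditions hold.

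The only substantive input is the quotient-completeness Lemma~\ref{lem:am-quotient-complete}, which rests on Leischner's theorem. That lemma is already available, so the remaining obstacle is just the bookkeeping that the hypotheses on \(N\) transfer from \(G\) to \(\rho G\). This transfer is handled by Lemma~\ref{lem:complete-kernel-closed-in-completion} together with the intrinsic nature of the properties of \(N\).
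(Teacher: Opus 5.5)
Your proposal is correct and follows the paper's proof step for step. The sequence is the same: Lemma~\ref{lem:complete-kernel-closed-in-completion} with \(L=\rho G\), then the observation that \(N\) keeps its properties inside \(\rho G\), then Lemma~\ref{lem:am-quotient-complete}, Proposition~\ref{prop:general-criterion}, and Lemma~\ref{lem:continuous-image}. The paper justifies the middle step by noting that the topology and two-sided uniformity induced on \(N\) from \(\rho G\) agree with those from \(G\), which is your intrinsic-property remark.
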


\begin{proof}
By Lemma~\ref{lem:complete-kernel-closed-in-completion}, applied with \(L=\rho G\), the subgroup \(N\) is closed and normal in \(\rho G\). The topology and the two-sided uniformity on \(N\) induced from \(\rho G\) agree with those induced from \(G\). Hence \(N\) remains Raikov complete and almost metrizable. Lemma~\ref{lem:am-quotient-complete} shows that \((\rho G)/N\) is Raikov complete. Proposition~\ref{prop:general-criterion} shows that \(\widehat q\) is a quotient homomorphism with kernel \(N\) and gives the inverse-image equality and the remainder formula. The final assertion follows from Lemma~\ref{lem:continuous-image}.
\end{proof}

\begin{corollary}\label{cor:lc-kernel}
Let \(G\) be a topological group, let \(N\) be a closed locally compact normal subgroup of \(G\), and let
\[
        \widehat q:\rho G\to\rho(G/N)
\]
be the extension of the quotient homomorphism \(q:G\to G/N\). Then \(\widehat q\) is a quotient homomorphism with kernel \(N\),
\[
        \widehat q^{-1}(G/N)=G,
\]
and
\[
        r_\rho(G/N)=\widehat q\bigl(r_\rho(G)\bigr).
\]
In particular, if \(r_\rho(G)\) is pseudocompact, then \(r_\rho(G/N)\) is pseudocompact.
\end{corollary}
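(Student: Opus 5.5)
The corollary is a direct specialization of Theorem~\ref{thm:complete-am-kernel}, so I would verify its two hypotheses for a closed locally compact normal subgroup \(N\) and then quote the theorem. First, by Lemma~\ref{lem:lc-complete}, the locally compact group \(N\) is Raikov complete. Here the topology of \(N\) is the subspace topology from \(G\), and local compactness refers to that topology. By \cite[Proposition~1.8.4]{ArhangelskiiTkachenko2008}, the two-sided uniformity of \(N\) as a topological group coincides with the one induced from \(G\), so there is no ambiguity in what ``Raikov complete'' means for \(N\).

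Second, by \cite[Lemma~13.13, p.~242]{RoelckeDierolf1981}, as recorded just before Lemma~\ref{lem:lc-complete}, every locally compact group is almost metrizable. In other words, \(N\) contains a compact subgroup \(K\) such that \(N/K\) is metrizable, equivalently \(K\) has a countable neighbourhood base in \(N\). Thus \(N\) is a closed normal subgroup of \(G\) that is both Raikov complete and almost metrizable.

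Theorem~\ref{thm:complete-am-kernel} then applies verbatim. It shows that \(\widehat q\) is a quotient homomorphism with kernel \(N\) and that \(\widehat q^{-1}(G/N)=G\). It also gives
\[
        r_\rho(G/N)=\widehat q\bigl(r_\rho(G)\bigr),
\]
and that pseudocompactness of \(r_\rho(G)\) passes to \(r_\rho(G/N)\) by Lemma~\ref{lem:continuous-image}.

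There is no real obstacle here. The only point needing care is that the cited almost-metrizability result is stated in the same sense as the definition in Section~\ref{sec:preliminaries}, namely a compact subgroup with metrizable coset space. I would make this explicit when citing Roelcke--Dierolf. If one prefers a self-contained argument, I would use the classical Kakutani--Kodaira-type fact that a locally compact group has a compact subgroup \(K\) of countable character, which is the equivalent formulation via \cite[Lemma~4.3.19]{ArhangelskiiTkachenko2008}.
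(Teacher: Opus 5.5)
Your proposal is correct and matches the paper's proof: you get Raikov completeness of \(N\) from Lemma~\ref{lem:lc-complete}, almost metrizability from Roelcke--Dierolf's Lemma~13.13, and then apply Theorem~\ref{thm:complete-am-kernel}. Your remarks on the induced two-sided uniformity and on the Kakutani--Kodaira route to a compact subgroup of countable character are sound but not needed.
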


\begin{proof}
By Lemma~\ref{lem:lc-complete}, the group \(N\) is Raikov complete. It is also almost metrizable \cite[Lemma~13.13, p.~242]{RoelckeDierolf1981}. The conclusion follows from Theorem~\ref{thm:complete-am-kernel}.
\end{proof}

\begin{corollary}
Let \(G\) be a topological group, and let \(N\) be a closed locally compact normal subgroup of \(G\). If \(r_\rho(G)\) is compact or countably compact, then \(r_\rho(G/N)\) has the same property.
\end{corollary}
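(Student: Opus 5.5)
By Corollary~\ref{cor:lc-kernel}, the remainder formula
\[
        r_\rho(G/N)=\widehat q\bigl(r_\rho(G)\bigr)
\]
holds. Here \(\widehat q:\rho G\to\rho(G/N)\) is continuous, so \(r_\rho(G/N)\) is a continuous image of \(r_\rho(G)\). The plan is to show that this restricted map sends compact or countably compact spaces to spaces with the same property. The only point needing care is that the formula is an equality of sets, so the restriction must be checked to be continuous onto its image with the subspace topologies.

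First I will verify that the restriction
\[
        \widehat q\restriction r_\rho(G):r_\rho(G)\to r_\rho(G/N)
\]
is a well-defined continuous surjection. By Corollary~\ref{cor:lc-kernel}, \(\widehat q^{-1}(G/N)=G\). Hence \(\widehat q\) maps \(r_\rho(G)=\rho G\setminus G\) into \(\rho(G/N)\setminus(G/N)=r_\rho(G/N)\). The remainder formula then gives surjectivity. Continuity holds because the restriction of a continuous map to a subspace is continuous into any subspace containing its image.

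Next I will apply the two standard preservation facts, which are the analogues of Lemma~\ref{lem:continuous-image}. A continuous image of a compact space is compact. A continuous image of a countably compact space is countably compact \cite[Theorem~3.10.5]{Engelking1989}. For the latter one can argue directly: if \(\{U_n\}\) is a countable open cover of the image, then the preimages form a countable open cover of \(r_\rho(G)\), so a finite subfamily covers, and the images of these finitely many preimages give a finite subcover. Since \(r_\rho(G/N)\) is Hausdorff, being a subspace of a Hausdorff group, compactness in the Hausdorff sense also transfers.

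No step is a genuine obstacle, because all the substantive work is already done in Theorem~\ref{thm:complete-am-kernel}. The only thing to watch is the empty case: if \(r_\rho(G)=\varnothing\), then \(r_\rho(G/N)=\varnothing\) too, and the empty space is trivially compact and countably compact.
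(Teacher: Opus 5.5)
Your proposal is correct and follows the paper's proof: Corollary~\ref{cor:lc-kernel} gives \(r_\rho(G/N)=\widehat q\bigl(r_\rho(G)\bigr)\), and compactness and countable compactness are preserved by continuous images. Your extra checks (that the restriction is a continuous surjection onto the remainder, Hausdorffness, the empty case) are harmless elaborations of the same one-line argument.
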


\begin{proof}
By Corollary~\ref{cor:lc-kernel},
\[
        r_\rho(G/N)=\widehat q(r_\rho(G)).
\]
The conclusion follows because compactness and countable compactness are preserved by continuous images.
\end{proof}

\subsection{Applications and examples}\label{sec:applications}

The theorem can also be stated in exact-sequence form.

\begin{corollary}\label{cor:exact-sequence}
Let
\[
        1\longrightarrow N\longrightarrow G\stackrel{q}{\longrightarrow} Q\longrightarrow 1
\]
be an exact sequence of topological groups, where \(q\) is a quotient homomorphism and \(N\) is a closed normal subgroup of \(G\). Suppose that \(N\) is Raikov complete and almost metrizable. Then the extension \(\widehat q:\rho G\to\rho Q\) of \(q\) is a quotient homomorphism with kernel \(N\), so
\[
        1\longrightarrow N\longrightarrow\rho G
        \stackrel{\widehat q}{\longrightarrow}\rho Q
        \longrightarrow 1
\]
is exact. Moreover,
\[
        \widehat q^{-1}(Q)=G.
\]
Consequently,
\[
        r_\rho(Q)=\widehat q\bigl(r_\rho(G)\bigr).
\]
In particular, if \(r_\rho(G)\) is pseudocompact, then \(r_\rho(Q)\) is pseudocompact.
\end{corollary}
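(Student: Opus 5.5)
The plan is to reduce Corollary~\ref{cor:exact-sequence} to Theorem~\ref{thm:complete-am-kernel} by identifying \(Q\) with \(G/N\). Exactness of the given sequence means \(N=\ker q\) and \(q\) is onto. Since \(q\) is a quotient homomorphism, the induced map \(\bar q:G/N\to Q\) is a topological isomorphism. By Lemma~\ref{lem:extension-to-completions}, \(\bar q\) extends to a continuous homomorphism \(\rho(G/N)\to\rho Q\), and so does \(\bar q^{-1}\). The two composites extend the identities on the dense subgroups, so by uniqueness they are identities. Hence the extension \(\widehat{\bar q}:\rho(G/N)\to\rho Q\) is a topological isomorphism carrying \(G/N\) onto \(Q\), and therefore carrying \(r_\rho(G/N)\) onto \(r_\rho(Q)\).

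Next, let \(\widehat{q_0}:\rho G\to\rho(G/N)\) be the extension of the canonical quotient map \(q_0:G\to G/N\). Then \(\widehat{\bar q}\circ\widehat{q_0}\) and \(\widehat q\) are continuous homomorphisms \(\rho G\to\rho Q\) that agree on \(G\), since \(q=\bar q\circ q_0\). By density and Hausdorffness they coincide. Theorem~\ref{thm:complete-am-kernel} gives three facts about \(\widehat{q_0}\): it is a quotient homomorphism with kernel \(N\), it satisfies \(\widehat{q_0}^{-1}(G/N)=G\), and \(r_\rho(G/N)=\widehat{q_0}(r_\rho(G))\). Composing with the topological isomorphism \(\widehat{\bar q}\) transports all three properties. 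Thus \(\widehat q\) is a quotient homomorphism with kernel \(N\), and
\[
\widehat q^{-1}(Q)=\widehat{q_0}^{-1}\bigl(\widehat{\bar q}^{-1}(Q)\bigr)=\widehat{q_0}^{-1}(G/N)=G .
\]
The remainder formula \(r_\rho(Q)=\widehat q(r_\rho(G))\) follows in the same way. Exactness of \(1\to N\to\rho G\to\rho Q\to1\) then says exactly that \(\widehat q\) is onto with kernel \(N\). Here \(N\) sits in \(\rho G\) as a closed normal subgroup by Lemma~\ref{lem:complete-kernel-closed-in-completion}.

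The pseudocompactness assertion follows from Lemma~\ref{lem:continuous-image}, or directly from Theorem~\ref{thm:complete-am-kernel} together with the homeomorphism \(r_\rho(G/N)\cong r_\rho(Q)\).

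The only delicate point is the bookkeeping of the identification. One must check that the completion isomorphism induced by \(\bar q\) maps the copy of \(G/N\) onto the copy of \(Q\), and not merely onto some dense subgroup. This holds because it restricts to \(\bar q\) on \(G/N\). Everything else is a direct transfer from the theorem.
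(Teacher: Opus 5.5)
Your proposal is correct and follows the paper's proof essentially step for step. You identify \(Q\) with \(G/N\) through the induced topological isomorphism, and extend it and its inverse via Lemma~\ref{lem:extension-to-completions} to get a topological isomorphism \(\rho(G/N)\to\rho Q\) carrying \(G/N\) onto \(Q\). You then factor \(\widehat q\) through the extension of the canonical quotient map by uniqueness of extensions and transfer the conclusions of Theorem~\ref{thm:complete-am-kernel}, spelling out a few details (why the composite extensions are identities, and the explicit preimage computation) that the paper leaves implicit.
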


\begin{proof}
Let \(q_N:G\to G/N\) be the quotient homomorphism, and write \(\widehat q_N:\rho G\to\rho(G/N)\) for its extension. Let \(\theta:G/N\to Q\) be the topological isomorphism induced by \(q\). Applying Lemma~\ref{lem:extension-to-completions} to \(\theta\) and \(\theta^{-1}\) shows that \(\theta\) extends to a topological isomorphism \(\widehat\theta:\rho(G/N)\to\rho Q\), which carries \(G/N\) onto \(Q\) and \(r_\rho(G/N)\) onto \(r_\rho(Q)\). Since \(q=\theta\circ q_N\), uniqueness of extensions gives \(\widehat q=\widehat\theta\circ\widehat q_N\). The result now follows from Theorem~\ref{thm:complete-am-kernel}.
\end{proof}

The exact-sequence formulation includes split projections. Let \(H\) be a topological group, let \(L\) be a locally compact topological group, and let
\[
        q:H\times L\longrightarrow H
\]
be the projection onto the first factor. Let
\[
        \widehat q:\rho(H\times L)\longrightarrow\rho H
\]
be its extension. The kernel of \(q\) is \(\{e_H\}\times L\), which is closed and topologically isomorphic to \(L\). Hence it is Raikov complete by Lemma~\ref{lem:lc-complete} and almost metrizable by \cite[Lemma~13.13, p.~242]{RoelckeDierolf1981}. Corollary~\ref{cor:exact-sequence} gives
\[
        r_\rho(H)=\widehat q\bigl(r_\rho(H\times L)\bigr),
\]
in agreement with Proposition~\ref{prop:product-formula}. Taking \(L=\mathbb Z\) with the discrete topology gives a noncompact discrete kernel. If \(H\) is not Raikov complete, then both \(r_\rho(H)\) and \(r_\rho(H\times L)\) are nonempty.

\subsection{Sharpness and open problems}\label{sec:sharpness}

The quotient-completeness hypothesis in Proposition~\ref{prop:general-criterion} cannot be omitted. Quotients of Raikov-complete groups need not be Raikov complete \cite{Leischner1991}.

\begin{proposition}\label{prop:not-onto}
Let \(L\) be a Raikov-complete topological group, let \(N\) be a closed normal subgroup of \(L\), and let \(q:L\to L/N\) be the quotient homomorphism. If \(L/N\) is not Raikov complete, then the extension
\[
        \widehat q:\rho L\to \rho(L/N)
\]
of \(q\) is not onto, while
\[
        \widehat q^{-1}(L/N)=\rho L=L.
\]
\end{proposition}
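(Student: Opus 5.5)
Since \(L\) is Raikov complete, we identify \(\rho L\) with \(L\) via the natural homomorphism, which is onto by definition. Under this identification \(\widehat q\) is a continuous homomorphism \(L\to\rho(L/N)\) extending \(q\). By uniqueness of the extension in Lemma~\ref{lem:extension-to-completions}, \(\widehat q\) coincides with \(q\) followed by the inclusion \(L/N\hookrightarrow\rho(L/N)\). The plan is to read off both claims from this description.

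\emph{Image.} Because \(\widehat q=q\) on \(L=\rho L\), we get
\[
        \widehat q(\rho L)=q(L)=L/N.
\]
By hypothesis \(L/N\) is not Raikov complete, so the natural homomorphism of \(L/N\) into \(\rho(L/N)\) is not onto, i.e.\ \(L/N\subsetneq\rho(L/N)\). Hence \(\widehat q(\rho L)=L/N\ne\rho(L/N)\), and \(\widehat q\) is not onto. Equivalently, \(r_\rho(L/N)\) is nonempty while \(\widehat q\) misses it entirely.

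\emph{Preimage.} Since \(\widehat q\) maps all of \(\rho L=L\) into \(L/N\), we have \(\widehat q^{-1}(L/N)=\rho L\). Combined with \(\rho L=L\), this gives the displayed chain of equalities. In particular \(r_\rho(L)=\varnothing\), so \(\widehat q(r_\rho(L))=\varnothing\neq r_\rho(L/N)\). This is consistent with Proposition~\ref{prop:quotient-criterion}: the inverse-image condition holds but surjectivity fails.

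The only point needing care is the identification \(\rho L=L\) and the fact that the extension then literally equals \(q\). I would justify this by uniqueness of continuous extensions from the dense subgroup \(L\), which here is all of \(\rho L\). The result is a one-sided sharpness example: the inverse-image condition does not imply surjectivity. It is nonvacuous once one invokes a Raikov-complete group with a non-complete quotient as in \cite{Leischner1991}, which I would cite rather than construct.
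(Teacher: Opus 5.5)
Your proof is correct and follows essentially the same route as the paper: identify \(\rho L\) with \(L\), observe that \(\widehat q\) then coincides with \(q\), so that \(\widehat q(\rho L)=L/N\subsetneq\rho(L/N)\), and read off \(\widehat q^{-1}(L/N)=\rho L=L\). The paper phrases non-surjectivity as a contradiction while you argue directly. Your appeal to uniqueness of extensions is harmless but unnecessary, since \(\widehat q\) extends \(q\) by definition and \(\rho L=L\).
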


\begin{proof}
Since \(L\) is Raikov complete, we regard \(\rho L\) as \(L\). For every \(x\in\rho L=L\), the value \(\widehat q(x)=q(x)\) belongs to \(L/N\). Hence \(\widehat q^{-1}(L/N)=\rho L=L\). If \(\widehat q\) were onto, then
\[
        \rho(L/N)=\widehat q(\rho L)=q(L)=L/N,
\]
contrary to the assumption that \(L/N\) is not Raikov complete.
\end{proof}

Thus surjectivity of
\[
        \widehat q:\rho L\to \rho(L/N)
\]
can fail even when the preimage condition holds. In particular, \(\widehat q\) need not be onto even though \(q:L\to L/N\) is onto.

\begin{corollary}
Let \(L\), \(N\), and \(q\) be as in Proposition~\ref{prop:not-onto}. Then
\[
        r_\rho(L)=\varnothing,
\]
whereas
\[
        r_\rho(L/N)\ne\varnothing.
\]
Moreover,
\[
        \widehat q\bigl(r_\rho(L)\bigr)=\varnothing.
\]
Consequently,
\[
        r_\rho(L/N)\ne\widehat q\bigl(r_\rho(L)\bigr).
\]
\end{corollary}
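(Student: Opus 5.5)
The plan is to read off each of the four assertions from Proposition~\ref{prop:not-onto} and the definition of the Raikov remainder. First, since \(L\) is Raikov complete, we identify \(\rho L\) with \(L\), exactly as in the proof of Proposition~\ref{prop:not-onto}. Then
\[
        r_\rho(L)=\rho L\setminus L=\varnothing .
\]
Second, because \(L/N\) is assumed not to be Raikov complete, the natural embedding of \(L/N\) into \(\rho(L/N)\) is not onto. Hence \(\rho(L/N)\setminus(L/N)\neq\varnothing\), which says precisely that \(r_\rho(L/N)\neq\varnothing\).

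Third, the image of the empty set under any map is empty, so \(\widehat q\bigl(r_\rho(L)\bigr)=\widehat q(\varnothing)=\varnothing\). Combining this with the nonemptiness of \(r_\rho(L/N)\) yields the inequality \(r_\rho(L/N)\ne\widehat q\bigl(r_\rho(L)\bigr)\).

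As a consistency check, I would point out how this fits Proposition~\ref{prop:quotient-criterion}. Proposition~\ref{prop:not-onto} shows that \(\widehat q\) is not onto, so one of the two conditions in the criterion fails. The criterion therefore also predicts that the remainder identity fails, even though the inverse-image condition \(\widehat q^{-1}(L/N)=L\) holds. This is the content the corollary is meant to illustrate.

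I do not expect any real obstacle. The only point needing care is the identification \(\rho L=L\) for a Raikov-complete group, and this is exactly the definition of Raikov completeness adopted in Section~\ref{sec:preliminaries}.
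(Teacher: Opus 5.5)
Your proof is correct and follows the paper's argument step for step: \(r_\rho(L)=\varnothing\) because \(L\) is Raikov complete, \(r_\rho(L/N)\ne\varnothing\) because \(L/N\) is not, and \(\widehat q(\varnothing)=\varnothing\) gives the inequality. Your cross-check against Proposition~\ref{prop:quotient-criterion} is valid but not needed for the proof.
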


\begin{proof}
Since \(L\) is Raikov complete, \(r_\rho(L)=\varnothing\). Since \(L/N\) is not Raikov complete, its Raikov completion is a proper completion and therefore \(r_\rho(L/N)\ne\varnothing\). Finally, \(\widehat q(r_\rho(L))=\widehat q(\varnothing)=\varnothing\), so the remainder formula fails.
\end{proof}

This does not yet give a counterexample to preservation of pseudocompactness under quotients. The original remainder is empty and hence pseudocompact, but the argument shows only that the quotient remainder is nonempty, not that it fails to be pseudocompact.

The next example separates surjectivity from the preimage condition of Proposition~\ref{prop:quotient-criterion}.

\begin{example}\label{ex:padic-preimage-failure}
Let \(p\) be a prime and let \(G=\mathbb Z\) be the additive group of integers with the \(p\)-adic topology. Then the Raikov completion of \(G\) is the additive group \(\mathbb Z_p\) of \(p\)-adic integers. Fix a positive integer \(k\) and put
\[
        N=p^k\mathbb Z.
\]
The subgroup \(N\) is closed and normal in \(G\), and it is almost metrizable because it is metrizable. It is not Raikov complete, since its Raikov completion is \(p^k\mathbb Z_p\).

The quotient \(G/N\) is the finite discrete group \(\mathbb Z/p^k\mathbb Z\), and hence
\[
        \rho(G/N)=G/N.
\]
The extension of the quotient homomorphism is the natural map
\[
        \widehat q:\mathbb Z_p\to \mathbb Z/p^k\mathbb Z.
\]
This map is onto. Since \(\rho(G/N)=G/N\), we have
\[
        \widehat q^{-1}(G/N)=\mathbb Z_p\ne\mathbb Z=G.
\]
Thus the preimage condition in Proposition~\ref{prop:quotient-criterion} is not automatic, even when the induced map on completions is onto.

Each fibre \(a+p^k\mathbb Z_p\) of \(\widehat q\), where \(a\in\mathbb Z\), contains points outside \(\mathbb Z\), because \(a+p^k\mathbb Z\) is a proper dense subset of \(a+p^k\mathbb Z_p\). Therefore
\[
        \widehat q(r_\rho(G))=G/N.
\]
On the other hand,
\[
        r_\rho(G/N)=\varnothing.
\]
Hence the image formula can fail when the kernel is almost metrizable but not Raikov complete. Theorem~\ref{thm:complete-am-kernel} does not apply because \(N\) is not Raikov complete.
\end{example}

Proposition~\ref{prop:not-onto} shows that surjectivity may fail while the preimage condition holds, whereas Example~\ref{ex:padic-preimage-failure} shows that the preimage condition may fail while surjectivity holds. Thus neither condition in Proposition~\ref{prop:quotient-criterion} follows from the other under the standing assumptions. The first question concerns quotient completeness beyond the almost metrizable case. Because \(N\) is normal and hence neutral, Leischner's inframetrizable quotient theorem gives completeness of \((\rho G)/N\) for the supremum of the induced left and right quotient uniformities whenever the ambient Raikov-complete group \(\rho G\) is inframetrizable. Since \(N\) is normal, this supremum is the two-sided uniformity of the quotient group. Thus \((\rho G)/N\) is Raikov complete in this case \cite{Leischner1993}. The next two questions concern preservation of pseudocompactness under quotient homomorphisms.

\begin{question}
Let \(N\) be a closed normal subgroup of a topological group \(G\), and suppose that \(N\) is Raikov complete but not almost metrizable. Outside the case where \(\rho G\) is inframetrizable, what additional conditions on \(\rho G\) and \(N\) ensure that \((\rho G)/N\) is Raikov complete?
\end{question}

\begin{question}\label{q:pseudo-counterexample}
Does there exist a topological group \(G\) and a closed normal subgroup \(N\) such that \(r_\rho(G)\) is pseudocompact but \(r_\rho(G/N)\) is not pseudocompact?
\end{question}

\begin{question}
Let \(G\) be a topological group such that \(r_\rho(G)\) is normal as a topological space and pseudocompact. For which closed normal subgroups \(N\) of \(G\) must \(r_\rho(G/N)\) be pseudocompact?
\end{question}

An affirmative answer to Question~\ref{q:pseudo-counterexample} would show that pseudocompactness of Raikov remainders is not preserved by arbitrary quotient homomorphisms. A negative answer would establish such preservation for every closed normal subgroup, without any almost-metrizability assumption on the kernel.

\end{document}